\documentclass[11pt]{article}

\usepackage{amsfonts,amsmath,amssymb,amsthm,graphicx,mathtools,hyperref,verbatim}
\usepackage[margin=1in]{geometry}

\newtheorem*{zl*}{Zolotarev's Lemma}
\newtheorem*{qr*}{Law of Quadratic Reciprocity}

\newtheorem{theorem}{Theorem}

\newtheorem{proposition}{Proposition}

\theoremstyle{definition}

\newtheorem*{exercise*}{Exercise}

\newcommand\R{{\mathbb R}}

\title{The sesquicentennial of the prime number $2^{127} - 1$}

\author{Matthew Baker}

\begin{document}

\maketitle

The year 2026 marks the $150^{\rm th}$ anniversary of a remarkable achievement in mathematics: in 1876, the French mathematician {\'E}douard Lucas 
% (1842--1891) 
showed that the 39-digit number $M_{127} := 2^{127} -1$ is prime; see \cite{Lucas1876b}.
Explicitly, we have
\[
M_{127} = 170141183460469231731687303715884105727.
% 170,141,183,460,469,231,731,687,303,715,884,105,727.
\]
This stood for 75 years as the largest known prime, and it remains the largest prime number discovered and certified without the aid of a mechanical device.
Moreover, as we shall see, the method Lucas used is rather astonishing.

The methods which Lucas pioneered are still used today by computers.
The current record for the largest number known to be prime  is $2^{136,279,841} - 1$, which has 41,024,320 decimal digits and was 
found on October 12, 2024 as part of the Great Internet Mersenne Prime Search (GIMPS) \cite{GIMPS}. The proof that this number is prime uses the \emph{Lucas--Lehmer test}, one of the most beautiful algorithms in number theory, about which we will have more to say.

Lucas, who was born in 1842 in the French town of Amiens, was a strikingly original mathematician. According to the book ``\'Edouard Lucas and Primality Testing'' by Hugh C. Williams \cite{WilliamsLucasBook}, Lucas was ``the first individual to show that primality testing could be performed, at least on certain forms of numbers, without recourse to the laborious and tedious process of trial division.'' Aside from his work on primality testing, he also made notable contributions to recreational mathematics.
For example, he invented the Tower of Hanoi puzzle, which he marketed under the nickname {\em N. Claus de Siam} (an anagram of Lucas d'Amiens), and he published the first description of the Dots and Boxes game.

Lucas died in quite an unusual way: a waiter at a banquet accidentally dropped the platter he was carrying and a piece of broken plate cut Lucas on the cheek. Lucas developed a severe skin inflammation and died a few days later, at the age of 49.

\begin{figure}[h]
\centering
\includegraphics[scale=1.5]{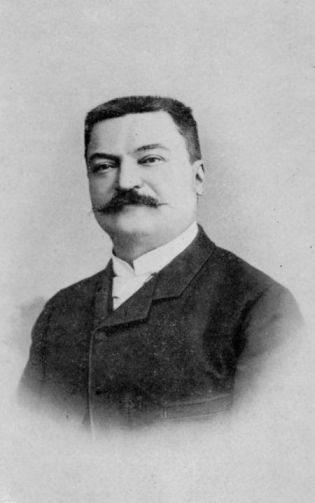}
\caption{Édouard Lucas (1842–1891).}
\end{figure}

\subsection*{Lucas's proof that $M_{127}$ is prime}

One of Lucas’ marvelous discoveries was that the celebrated Fibonacci sequence $(F_n)$ defined by $F_0 = 0, F_1 = 1$, and $F_{n+1} = F_n + F_{n-1}$ for $n \geq 1$ has a natural “companion sequence” $(L_n)$, 
defined by 
\[
L_0 = 2, L_1 = 1, \textrm{ and } L_{n+1} = L_n + L_{n-1} \textrm{ for } n \geq 1.   
\]
The relationship between the sequences $(F_n)$ and $(L_n)$ is, roughly speaking, like the relationship between sine and cosine. For example, we have 
\begin{equation}\label{eq:F2n}
F_{2n}=F_nL_n,
\end{equation}
similar to the formula $\sin(2x) = 2 \sin(x) \cos(x)$.

Lucas was especially interested in primitive prime divisors of these sequences. Given an integer sequence $(a_k)_{k \geq 0}$, a prime $q$ is said to be a \emph{primitive prime divisor} of $a_n$ if 
\[
q \mid a_n \textrm{ but } q \nmid a_m \textrm{ for all } 0 \leq m < n.
\]
One of Lucas's interesting theorems is that a primitive prime divisor $q \neq 5$ of the $n^{\rm th}$ Fibonacci number $F_n$ must be congruent to $\pm 1$ modulo $n$.

Lucas discovered the sequence $(L_n)$ which now bears his name in connection with primality testing, and he showed that $M_{127}$ is prime using roughly the following argument, translated into modern terms; see \cite{Lucas1876a}.\footnote{Lucas’s proofs were not fully rigorous by modern standards, but as Williams explains in \cite{WilliamsLucasBook} the gaps are minor and easily repaired. Williams also adds, ``It should be kept in mind that during the brief time during which Lucas was developing his seminal work on primality testing [1876-1878]\ldots he wrote at least 70 papers on many other subjects.''}
% \footnote{By modern standards, Lucas did not provide rigorous proofs of his theorems. However, Williams writes: ``Lucas has been criticized for his many errors and omissions, particularly by Carmichael; however, there can be no doubt that for the most part Lucas understood very well the basic principles behind his results. It is true that his statements and proofs thereof often leave something to be desired; but, on the other hand, these defects are often very easily repaired and do not require the machinery that Carmichael thought necessary. Indeed, it is the author’s opinion that Carmichael’s work, while putting Lucas on a much more solid mathematical footing, tended to muddy the waters rather than clarify them\ldots Certainly, his presentation lacks the charm and infectious enthusiasm that so characterizes Lucas’ work. It should be kept in mind that during the brief time during which Lucas was developing his seminal work on primality testing [1876-1878]\ldots he wrote at least 70 papers on many other subjects. Considering this immense outpouring of activity during such a brief time period, it is easy to forgive him for the few, easily correctable deficiencies that we have mentioned.''}

\begin{proposition}
Let $p\ge 3$ be prime and set $M_p=2^p-1$. If $M_p \mid L_{2^{p-1}}$, then $M_p$ is prime.
\end{proposition}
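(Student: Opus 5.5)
Let $q$ be any prime divisor of $M_p$; the plan is to show $q > \sqrt{M_p}$, which forces $M_p$ to be prime. Throughout I will use the Binet-type formulas $L_n = \alpha^n + \beta^n$ and $F_n = (\alpha^n - \beta^n)/\sqrt5$ with $\alpha,\beta = (1\pm\sqrt5)/2$, so that $\alpha\beta = -1$.

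\emph{Order of $\alpha$.} First note $q \ne 2, 5$. Indeed $M_p$ is odd, and $2^p \equiv 2$ or $3 \pmod 5$ for odd $p$ (the order of $2$ mod $5$ is $4$), so $5 \nmid M_p$. Work in the finite ring $R = \mathbb{Z}[\alpha]/q\mathbb{Z}[\alpha] \cong \mathbb{F}_q[x]/(x^2-x-1)$, which is either $\mathbb{F}_{q^2}$ or $\mathbb{F}_q\times\mathbb{F}_q$. Since $q\ne 5$, the discriminant $5$ is nonzero mod $q$ and $R$ is reduced. Write $N = 2^{p-1}$. The hypothesis gives $\alpha^N + \beta^N = 0$ in $R$. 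Multiplying by $\alpha^N$ and using $(\alpha\beta)^N = (-1)^N = 1$ (since $N$ is even), we get $\alpha^{2N} = -1$. Because $q$ is odd, $-1 \ne 1$ in $R$, so $\alpha$ has multiplicative order exactly $4N = 2^{p+1}$ in $R^\times$.

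\emph{Bounding the order.} Next, $|R^\times|$ is $q^2-1$ or $(q-1)^2$. A cleaner route is to use Frobenius: $\alpha^q$ equals $\alpha$ or $\beta = -\alpha^{-1}$, depending on whether $5$ is a square mod $q$. Hence $\alpha^{q-1} = 1$ or $\alpha^{q+1} = -1$, and in either case $\alpha^{2(q\mp1)} = 1$. (Alternatively, Lucas's theorem that $q \equiv \pm1 \pmod n$ for primitive divisors of $F_n$ could be invoked via \eqref{eq:F2n}, but the direct Frobenius argument is self-contained.) It follows that $2^{p+1} \mid 2(q\pm1)$, so $2^p \mid q\pm 1$ and therefore $q \ge 2^p - 1 = M_p$.

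Thus every prime factor of $M_p$ is at least $M_p$, so $M_p$ is prime. The step I expect to need the most care is justifying the Frobenius identity $\alpha^q \in \{\alpha,\beta\}$ in $R$ when $R$ is not a field. It holds because $x \mapsto x^q$ is a ring endomorphism of $R$ fixing $\mathbb{F}_q$, so it sends $\alpha$ to a root of $x^2-x-1$. Those roots in $R$ are only $\alpha$ and $\beta$: in the split case $R \cong \mathbb{F}_q\times\mathbb{F}_q$ with $\alpha\mapsto(a,b)$, $\beta\mapsto(b,a)$ for distinct roots $a,b$, and Frobenius is the identity, so $\alpha^q = \alpha$. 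Alternatively, one can sidestep the issue by working in the field $\mathbb{F}_q(\sqrt5)\subseteq\mathbb{F}_{q^2}$ from the start, taking $\sqrt5\in\mathbb{F}_{q^2}$ and $\alpha=(1+\sqrt5)/2$ there.
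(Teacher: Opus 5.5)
Your proof is correct, but it reaches the key congruence by a different route than the paper. The paper's sketch stays within Fibonacci/Lucas-number identities. From $F_{2^p}=F_{2^{p-1}}L_{2^{p-1}}$, $\gcd(F_n,L_n)\mid 2$ and $\gcd(F_m,F_n)=F_{\gcd(m,n)}$, it deduces that $q$ is a primitive prime divisor of $F_{2^p}$. It then quotes Lucas's theorem that such a $q\neq 5$ satisfies $q\equiv\pm1\pmod{2^p}$.

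You bypass all of these facts. Using Binet's formula, you show directly that $\alpha=(1+\sqrt5)/2$ has order exactly $2^{p+1}$ modulo $q$. Frobenius then gives $\alpha^{2(q\mp1)}=1$ for one choice of sign, hence $2^p\mid q\mp1$. In effect you reprove the needed case of Lucas's primitive-divisor theorem in one stroke.

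The paper's route mirrors how Lucas himself reasoned, but as written it rests on three classical facts left unproved, and it tacitly uses $q\neq 5$, which you check explicitly. Your route is self-contained and closely parallels the paper's own ($\Leftarrow$) argument for Lucas--Lehmer, with $\sqrt5$ and $\alpha$ in place of $\sqrt3$ and $2+\sqrt3$. The paper uses only Lagrange there, while your Frobenius step recovers Lucas's sharper congruence.

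One small slip: the sentence ``those roots in $R$ are only $\alpha$ and $\beta$'' is false in the split case. There $\mathbb{F}_q\times\mathbb{F}_q$ contains the four roots $(a,a),(a,b),(b,a),(b,b)$ of $x^2-x-1$. Your conclusion survives, because you handle that case by noting that Frobenius is the identity, so $\alpha^q=\alpha$. You could avoid the issue altogether in either of two ways:
\begin{itemize}
\item work in the field $\mathbb{F}_q(\sqrt5)$ from the start, as you suggest;
\item use only Lagrange, as in your stated plan: $2^{p+1}$ divides $|R^\times|\le q^2-1$, so $q^2>2^{p+1}>M_p$, i.e.\ $q>\sqrt{M_p}$.
\end{itemize}
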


\begin{proof} (Sketch)
Let $q$ be any prime divisor of $M_p$. Since $M_p\mid L_{2^{p-1}}$ by hypothesis, we have
$q \mid L_{2^{p-1}}$.
Using \eqref{eq:F2n} with $n=2^{p-1}$,
\[
F_{2^p}=F_{2^{p-1}}\,L_{2^{p-1}},
\]
so $q\mid F_{2^p}$.

If $q\mid F_{2^{p-1}}$, then $q$ would divide both
$F_{2^{p-1}}$ and $L_{2^{p-1}}$. However, Lucas knew that
$\gcd(F_n,L_n) \mid 2$ for all $n$, which contradicts the fact that $q$ is odd.
Thus $q\nmid F_{2^{p-1}}$.

Lucas also knew that $\gcd(F_m,F_n) = F_{\gcd(m,n)}$ for all $m,n$, 
and combined with $q\nmid F_{2^{p-1}}$, this implies that
$q$ is a primitive prime divisor of $F_{2^p}$.
Thus 
\begin{equation}
\label{qeqpm1}
q\equiv \pm1\pmod{2^p}.
\end{equation}

Since $q\mid 2^p-1$, we have $q\le 2^p-1<2^p$,
and the congruence \eqref{qeqpm1} forces $q\ge 2^p-1$.
Therefore 
\[
q = 2^p - 1 = M_p.
\qedhere
\]
\end{proof}

% First of all, by examining the factorizations of various Fibonacci numbers, Lucas noticed that if $p$ is a prime divisor of $F_n$ such that $p \nmid F_m$ for all proper divisors $m$ of $n$ (such a prime is called a \emph{primitive prime divisor} of $F_n$), 
% then $p \equiv \pm 1 \pmod{n}$.  (For example, 17 is a primitive prime divisor of $F_9 = 34$ and a non-primitive prime divisor of $F_{27} = 196418$.)  Formulas \eqref{eq:Lucas1} and \eqref{eq:Lucas2} readily imply that if $p$ is an odd prime and 
% $p \mid L_{2^n}$, then $p$ is a primitive prime divisor of $F_{2^{n+1}}$ and hence $p \equiv \pm 1 \pmod{2^{n+1}}$.  

Thus, to prove that $M_{127}$ is prime, it suffices to verify that $M_{127} \mid L_{2^{126}} $. And this is what Lucas actually did --- 
but how, given that $L_{2^{126}}$ exceeds $10^{10^{37}}$?

Lucas noticed \cite{Lucas1876b} that if we set $r_k = L_{2^k}$ then $r_0 = 1, r_1 = 3$, and $r_{k+1} = r_k^2 - 2$ for $k \geq 1$; thus he needed to show that $r_{126} \equiv 0 \pmod{M_{127}}$.  This requires performing about 120 squaring operations and 120 divisions with numbers of up to 39 digits.  As Williams writes in Section 3.2: ``This is a lot of work by hand, but Lucas’ solution to this problem is very characteristic of him; he made the performance of this tedious arithmetic into something like a game.  He used a $127 \times 127$ chessboard to effect the computations.'' 

Lucas would encode the squaring procedure in binary on the chessboard, with pawns in squares representing 1 and empty squares representing 0.  He would take pawns away and move them around the board according to a specific procedure until the only pawns remaining were in the first row, and then the first row would give the least residue of $r_k^2$ modulo $M_{127}$ in binary.  In the end, Lucas proved in this way that $M_{127}$ is prime without ever writing anything down!

\subsection*{The Lucas--Lehmer test}

Although there are general-purpose algorithms for certifying primality, for example the celebrated polynomial-time test due to Agrawal, Kayal, and Saxena, such algorithms are much slower than special-purpose algorithms designed for numbers of a specific form.  
All of the largest known prime numbers are \emph{Mersenne primes}, meaning that they have the form $n = 2^p - 1$ for some odd prime $p$.\footnote{If $2^m - 1$ is prime them $m$ must itself be prime, so requiring that $p$ be prime is not actually a restriction. Indeed, if $m=ab$ then $2^m - 1=(2^a)^b-1=(2^a-1)(2^{a(b-1)}+2^{a(b-2)}+\cdots+2^a+1)$ and the factors on the right-hand side exceed $1$ when $a,b>1$. Since $2^{11}-1=2047=23\cdot 89$, primality of $p$ is necessary but not sufficient for $2^p - 1$ to be prime.}
As of the time of this writing, there are 52 known Mersenne primes, and it is unknown whether there are infinitely many such primes.

The reason that Mersenne primes can be certified as prime in a particularly efficient way is because of the \emph{Lucas--Lehmer test}, which is based on the pioneering ideas of Lucas sketched above, as refined by D.H. Lehmer.\footnote{Derrick Henry Lehmer (1905--1991) was a long-time professor of mathematics at U.C. Berkeley.  In 1950, Lehmer was one of 31 University of California faculty members fired for refusing to sign a loyalty oath during the McCarthy era; in 1952, the California Supreme Court declared the oath unconstitutional, and Lehmer returned to Berkeley. Lehmer built a number of fascinating mechanical ``sieve'' machines designed to factor large numbers. His first sieve, constructed in the students' workshop at U.C. Berkeley while Lehmer was still an undergraduate, used 19 separately looped bicycle chains which hung in loops from 10-tooth sprockets on a common shaft driven by a motor. \cite{LehmerHistoryPioneers}}
In what follows we give a ``modern'' exposition of the Lucas--Lehmer test combining observations of Michael Rosen \cite{Rosen1988}, J.W. Bruce \cite{Bruce1993}, and Greg Kuperberg (personal communication).\footnote{I have chosen to emphasize the role of the Chebyshev polynomial $T(x) = x^2 -2$ and its iterates in the following argument; this is not the standard approach to the proof but I personally find it enlightening.}

Let $T(x)=x^2-2$ and define the orbit of $4$ under iteration of $T$ by
\[
s_0=4, \qquad s_{n+1}=T(s_n)=s_n^2-2 \quad (n\ge0).
\]
Thus $(s_n) = T^{\circ n}(4)$ is the sequence
\[
4,\;14,\;194,\;37634,\;\dots
\]

Let $p$ be an odd prime and set $q:=2^p-1$.
We note that $q\equiv7\pmod{24}$, since induction on $k$ shows that
\[
2^k\equiv 8 \pmod{24}\quad\text{for all odd $k\ge3$.}
\]

\begin{theorem}[Lucas--Lehmer]
\label{thm:maintheorem}
For $p\ge 3$ prime,
\[
q =2^p-1 \text{ is prime}
\quad\Longleftrightarrow\quad
s_{p-2}\equiv 0 \pmod q.
\]
\end{theorem}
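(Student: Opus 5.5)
The plan is to exploit the conjugation identity for the Chebyshev map. Put $\omega = 2+\sqrt3$ and $\bar\omega = 2-\sqrt3$, so $\omega\bar\omega=1$ and $\omega+\bar\omega=4$. Since $T(x+x^{-1}) = x^2+x^{-2}$, induction gives
\[
s_n = \omega^{2^n}+\bar\omega^{2^n}.
\]
All congruences will then be read in a ring $R=\mathbb{Z}[\sqrt3]/(\ell)$ for a suitable prime $\ell$. The key reformulation is: multiplying by $\omega^{2^{p-2}}$, the condition $s_{p-2}\equiv0$ modulo a prime $\ell$ is equivalent to $\omega^{2^{p-1}}\equiv -1$ in $\mathbb{Z}[\sqrt3]/(\ell)$.

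\emph{Sufficiency} ($s_{p-2}\equiv 0 \Rightarrow q$ prime). Suppose $q$ is composite and let $\ell$ be a prime divisor of $q$ with $\ell^2\le q$. Then $\ell$ is odd and $\ell\ne3$, since $q\equiv7 \pmod{24}$. From $s_{p-2}\equiv0\pmod \ell$ we get $\omega^{2^{p-1}}=-1$ in $R=\mathbb{Z}[\sqrt3]/(\ell)$. Hence $\omega$ is a unit of exact order $2^p$ in $R^\times$. The ring $R$ has $\ell^2$ elements and $0$ is not a unit, so $|R^\times|\le \ell^2-1<q<2^p$. This contradicts the fact that $\omega$ has order $2^p$. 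This direction is the analogue of the Proposition above and needs nothing beyond Lagrange's theorem.

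\emph{Necessity} ($q$ prime $\Rightarrow s_{p-2}\equiv0$). Now let $q$ be prime and work in $R=\mathbb{Z}[\sqrt3]/(q)$. Since $q\equiv 7\pmod{24}$, quadratic reciprocity gives $\left(\frac{3}{q}\right)=-\left(\frac{q}{3}\right)=-\left(\frac{1}{3}\right)=-1$. So $R=\mathbb{F}_{q^2}$ is a field, and Frobenius $x\mapsto x^q$ is the conjugation $\sqrt3\mapsto-\sqrt3$. We must show $\omega^{(q+1)/2}=-1$.

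The trick I would use is $\omega = (1+\sqrt3)^2/2$. Then
\[
\omega^{(q+1)/2} = \frac{(1+\sqrt3)^{q+1}}{2^{(q+1)/2}}.
\]
For the numerator, Frobenius gives $(1+\sqrt3)^{q}(1+\sqrt3) = (1-\sqrt3)(1+\sqrt3) = -2$. For the denominator, $2^{(q+1)/2}=2\cdot 2^{(q-1)/2}=2\left(\frac{2}{q}\right)=2$, using $q\equiv 7\pmod 8$. Hence $\omega^{(q+1)/2} = -1$. Since $(q+1)/2=2^{p-1}$, the reformulation above yields $s_{p-2}\equiv0 \pmod q$.

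I expect the main obstacle to be this necessity direction. In particular, one must find the square-root identity $\omega=(1+\sqrt3)^2/2$ and correctly evaluate the two Legendre symbols. Both rely on the congruence $q\equiv 7 \pmod{24}$ established just before the theorem, together with the Law of Quadratic Reciprocity and its supplement for $2$.
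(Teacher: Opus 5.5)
Your proof is correct. In the necessity direction it takes a genuinely different route from the paper.

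The paper argues structurally. The norm-one subgroup $C=\{\alpha:\alpha^{q+1}=1\}$ of $\mathbb{F}_{q^2}^\times$ is cyclic of order $2^p$. The element $2+\sqrt3$ generates it, because a square root $c\in C$ would give $b=c+\bar c\in\mathbb{F}_q$ with $b^2=6$, contradicting $\left(\frac{6}{q}\right)=-1$. Hence $(2+\sqrt3)^{2^{p-1}}$ is the unique element of order $2$, i.e.\ $-1$.

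You instead evaluate $\omega^{(q+1)/2}$ directly from $2\omega=(1+\sqrt3)^2$, Frobenius, and Euler's criterion for $2$. So you never need the cyclicity of $\mathbb{F}_{q^2}^\times$ or the circle group. The two arguments use the same inputs, $\left(\frac{3}{q}\right)=-1$ and $\left(\frac{2}{q}\right)=1$, and they encode the same fact. The element $\omega$ does have square roots $\pm(1+\sqrt3)/\sqrt2$ in $\mathbb{F}_{q^2}$, but these have norm $-1$ and so lie outside $C$. Your computation is exactly the evaluation of that norm.

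Your translation step is also more direct than the paper's. Multiplying by the unit $\omega^{2^{p-2}}$ gives $s_{p-2}\equiv 0\iff\omega^{2^{p-1}}=-1$ in one line. The paper instead passes through $T^{\circ(p-1)}(4)=-2$ and the fact that $T(x)=-2$ only at $x=0$. Your reformulation serves both directions at once.

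In the sufficiency direction your argument is essentially the paper's. One small difference: you work in the ring $\mathbb{Z}[\sqrt3]/(\ell)$ with the crude bound $|R^\times|\le \ell^2-1$, which spares you the case split on whether $3$ is a square mod $\ell$.

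The paper's version buys conceptual unity with its theme of Chebyshev dynamics and the circle group. Yours is shorter and more elementary.
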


The key identity behind our proof will be the following special property of the Chebyshev polynomial $T(x) = x^2-2$, which is a linearly rescaled version of the trigonometric duplication formula $\cos(2\theta)=2\cos^2(\theta)-1$:
\begin{equation} \label{eq:ChebyshevProperty}
T(u+u^{-1})=(u+u^{-1})^2-2=u^2+u^{-2}.
\end{equation}
Thus, under the transformation
$x=u+u^{-1}$,
iteration of $T$ corresponds to the squaring map
$u \mapsto u^2$.

If $F$ is a field and $\alpha \in F$ satisfies $\alpha+\alpha^{-1}=4$, then by induction, for all $k \geq 0$ we have:
\begin{equation}\label{eq:alphaformula}
T^{\circ k}(4)=\alpha^{2^k}+\alpha^{-2^k}.
\end{equation}
For $F=\R$, solving $\alpha+\alpha^{-1}=4$ gives $\alpha=2+\sqrt3$.

For later use, note that in any field $F$,
\begin{equation} \label{eq:u=1}
u + u^{-1} = 2 \quad\Longleftrightarrow\quad u=1.
\end{equation}

\begin{proof}[Proof of Theorem \ref{thm:maintheorem}]
($\Rightarrow$)
Suppose $q$ is prime.
Since $q\equiv7\pmod{12}$, the Law of Quadratic Reciprocity implies
that $3$ is not a quadratic residue modulo $q$.

Let
\[
K=\mathbb F_q(\sqrt3),
\]
be the unique quadratic extension of $\mathbb F_q$, where $\sqrt{3}$ denotes a fixed square root of $3$ in $\mathbb F_q$.
Thus $K$ is a finite field with $q^2$ elements whose elements have the form $a+b\sqrt3$ with $a,b \in \mathbb F_q$.

Denote the nontrivial automorphism of $K$ sending $a+b\sqrt{3}$ to $a-b\sqrt{3}$ by $\alpha\mapsto\bar\alpha$.
Since the Frobenius map $\alpha \mapsto \alpha^q$ is also a non-trivial automorphism of $K$ and the Galois group of $K/\mathbb F_q$ has order 2, it follows that $\bar\alpha = \alpha^q$ for all $\alpha \in K$.

Consider the ``circle'' group
\[
C = \{\alpha\in K^\times : \alpha\bar\alpha=1\} 
= \{\alpha\in K^\times : \alpha^{q+1}=1\} . 
\]
Since $K^\times$ is a cyclic group of order $(q+1)(q-1)$, it follows that $C$ is cyclic of order $q+1 = 2^p$.

Also, $2+\sqrt3\in C$ since
\[
(2+\sqrt3)(2-\sqrt3)=1.
\]

{\bf Claim.} $2+\sqrt3$ generates $C$.

\smallskip

To see this, note that since $|C|$ is a power of $2$, the element $2+\sqrt3$
generates $C$ if and only if it is not a square in $C$.

Suppose for the sake of contradiction that $2+\sqrt3=c^2$ for some $c\in C$.
Let $b=c+\bar c\in\mathbb F_q$.
Using the Chebyshev identity \eqref{eq:ChebyshevProperty},
\[
b^2-2
=
c^2+\bar c^2
=
(2+\sqrt3)+(2-\sqrt3)
=
4.
\]
Thus $b^2=6$ in $\mathbb F_q$.
Since $q\equiv \pm 1 \pmod{8}$, the Law of Quadratic Reciprocity implies
that $2$ is a quadratic residue modulo $q$.
Since $3$ is a quadratic non-residue modulo $q$, $6=2\cdot 3$ must also be a non-residue.
This contradiction proves the claim.
% shows that $2+\sqrt3$ generates $C$.

Hence
\[
(2+\sqrt3)^{2^p}=1
\quad\text{but}\quad
(2+\sqrt3)^{2^{p-1}}\ne1.
\]

In a cyclic group of even order (written multiplicatively), there is a unique element of order $2$, namely $-1$.
% so the unique solution to $x^2 = 1$ in $C$ is $x=-1$.
Thus $(2+\sqrt3)^{2^{p-1}} = -1$.
Applying the identity \eqref{eq:alphaformula},
we obtain
\[
T^{\circ (p-1)}(4)=-2.
\]
Since $T(x)=-2$ only at the critical point $x=0$, we conclude that
\[
T^{\circ (p-2)}(4)=0
\quad\text{in }\mathbb F_q,
\]
as desired.

\medskip

($\Leftarrow$) Conversely, suppose
\[
T^{\circ (p-2)}(4)\equiv0\pmod q,
\]
and let $\ell$ be any prime divisor of $q$.

We work in the field $K=\mathbb F_\ell(\sqrt3)$,
which has order equal to either $\ell$ (if $\sqrt3 \in \mathbb F_\ell$) or $\ell^2$ (if not).
% which is either $\mathbb F_\ell$ or $\mathbb F_{\ell^2}$.
% The identity \eqref{eq:alphaformula} holds in $K$.

From the assumption, we obtain
\[
T^{\circ p}(4)=2
\quad\text{and}\quad
T^{\circ (p-1)}(4)=-2
\]
in $K$.
It follows from \eqref{eq:alphaformula} and \eqref{eq:u=1} that
\[
(2+\sqrt3)^{2^p}=1
\quad\text{but}\quad
(2+\sqrt3)^{2^{p-1}}\ne1
\]
in $K^\times$.
Thus $2+\sqrt3$ has order $2^p=q+1$ in $K^\times$, 
so by Lagrange’s theorem, $(q+ 1) \bigm | |K^\times|$.

Since $|K^\times|$ is either $\ell-1$ or $\ell^2-1$,
we conclude that $(q+1)\mid (\ell^2-1)$.
In particular $\ell>\sqrt q$.

Because this holds for every prime divisor $\ell$ of $q$, we conclude that $q$ is prime.
\end{proof}

\subsection*{Conclusion}

Lucas verified the primality of $M_{127}$ by carrying out a prototype of the Lucas–Lehmer test by hand on a $127\times127$ chessboard, encoding squaring in binary and reducing modulo $M_{127}$ step by step.  Today, essentially the same recurrence is executed by computers participating in the Great Internet Mersenne Prime Search, certifying primes with tens of millions of digits.  Although the scale has changed dramatically, the underlying mathematics has not.  The relationship between the squaring map, the Chebyshev polynomial $x^2-2$, and the arithmetic of finite fields remains the engine behind both Lucas’s chessboard computation and the largest known primality proofs of the present day.

\subsection*{Acknowledgments}

We thank Greg Kuperberg for helpful conversations.
We also thank Joe Silverman, Hugh Williams, and the anonymous referee for their feedback on a preliminary version of the manuscript, and ChatGPT 5.4 for assistance with proofreading and polishing the prose.

\end{document}